\definecolor{webgreen}{rgb}{0,.5,0}
\definecolor{webbrown}{rgb}{.6,0,0}
\newcommand{\seqnum}[1]{\href{http://oeis.org/#1}{\underline{#1}}}
\begin{document}

\theoremstyle{plain}
\newtheorem{theorem}{Theorem}
\newtheorem{corollary}[theorem]{Corollary}
\newtheorem{lemma}[theorem]{Lemma}
\theoremstyle{remark}
\newtheorem{remark}[theorem]{Remark}

\newcommand{\lrf}[1]{\left\lfloor #1\right\rfloor}

\begin{center}
\vskip 1cm{\LARGE\bf More Fibonacci-Bernoulli relations\\ with and without balancing polynomials \\
\vskip .11in }

\vskip 0.6cm

{\large  Robert Frontczak\footnote{Statements and conclusions made in this article by R. Frontczak are entirely those of the author. They do not necessarily reflect the views of LBBW.
} \\
Landesbank Baden-W\"urttemberg (LBBW) \\ Stuttgart,  Germany \\
\href{mailto:robert.frontczak@lbbw.de}{\tt robert.frontczak@lbbw.de}

\vskip 0.2 in

Taras Goy  \\ 
Faculty of Mathematics and Computer Science\\
Vasyl Stefanyk Precarpathian National University\\
Ivano-Frankivsk, Ukraine\\
\href{mailto:taras.goy@pnu.edu.ua}{\tt taras.goy@pnu.edu.ua}}
\end{center}

\vskip .2 in

\begin{abstract}
We continue our study on relationships between Bernoulli polynomials and balancing (Lucas-balancing) polynomials. From these polynomial relations, we deduce new combinatorial identities with Fibonacci (Lucas) and Bernoulli numbers. Moreover, we prove a special identity involving Bernoulli polynomials and Fibonacci numbers in arithmetic progression.
Special cases and some corollaries will highlight interesting aspects of our findings.
Our results complement and generalize these of Frontczak (2019).
\end{abstract}

\section{Motivation and preliminaries}

Let $B_n(x)$, $x\in\mathbb{C}$, be the $n$-th Bernoulli polynomial defined by
\begin{equation*}
H(x,z) = \sum_{n=0}^\infty B_n(x) \frac{z^n}{n!} = \frac{z e^{xz}}{e^z - 1} \qquad (|z|<2\pi)
\end{equation*}
and $B_n=B_n(0)$ being the $n$-th Bernoulli number \cite{1}.

 Let further $B_n^*(x)$ be the $n$-th balancing polynomial \cite{2}, i.e., polynomials defined by the recurrence
\begin{equation*}
B_n^*(x) = 6x B_{n-1}^*(x) - B_{n-2}^*(x), \qquad n\geq 2,
\end{equation*}
with the initial terms $B_0^*(x)=0$ and $B_1^*(x)=1$. Similarly, Lucas-balancing polynomials are defined by
\begin{equation*}
C_n(x) = 6x C_{n-1}(x) - C_{n-2}(x), \qquad n\geq 2,
\end{equation*}
with the initial terms $C_0(x)=1$ and $C_1(x)=3x$.  See \cite{2,5,AMI,Kim,Meng,Ray-UMJ} for more information about these polynomials. The numbers $B_n^*(1)=B_n^*$ and $C_n(1)=C_n$ are called balancing and Lucas-balancing numbers, respectively (see entries \seqnum{A001109} and \seqnum{A001541} in the On-Line Encyclopedia of Integer Sequences \cite{Sloane}).

Connections between Bernoulli polynomials $B_n(x)$ and balancing polynomials $B_n^*(x)$ are interesting, as they also give relations involving Bernoulli numbers and Fibonacci numbers (we refer to the papers \cite{3,4,7}). The links are
\begin{equation}\label{link1}
B_n^*\Big (\frac{L_{2m}}{6} \Big ) = \frac{F_{2mn}}{F_{2m}}\,, \qquad C_n\Big (\frac{L_{2m}}{6} \Big ) = \frac{L_{2mn}}{2},
\end{equation}
\begin{equation}\label{link2}
B_n^*\Big (\frac{i}{6} L_{2m+1} \Big ) = i^{n-1} \frac{F_{(2m+1)n}}{F_{2m+1}}\,,
\qquad C_n\Big (\frac{i}{6}L_{2m+1} \Big ) = i^n \frac{L_{(2m+1)n}}{2},
\end{equation}
where $m$ is a nonnegative integer, $i=\sqrt{-1}$, and $F_n$ and $L_n$ denote Fibonacci and Lucas numbers, respectively. These sequences are defined by $F_0=0, F_1=1, L_0=2, L_1=1$ and $X_{n}=X_{n-1}+X_{n-2}$ for $n\geq 2$ (entries \seqnum{A000045} and \seqnum{A000032} in \cite{Sloane}).

Recently, Frontczak \cite{3} showed, among other things, that
\begin{equation} \label{fro1}
\sum_{\substack{k=0 \\n \equiv k\, (\!\!\bmod 2)}}^n \!{n\choose k} B^*_{k}(x) (2\sqrt{9x^2-1})^{n-k} B_{n-k} = n C_{n-1}(x).
\end{equation}

Goubi \cite{6} instantly ``improved'' this relation to
\begin{equation} \label{gou1}
\sum_{k=0}^n {n\choose k} B^*_{k}(x) (2\sqrt{9x^2-1})^{n-k} B_{n-k} = n \bigl(C_{n-1}(x) - \sqrt{9x^2-1}B_{n-1}^*(x)\bigr).
\end{equation}
We point out, that since $B_{2n+1}=0$ for $n\geq 1$, the only non-zero contribution in Goubi's sum on the left comes from the index $k=n-1$, which obviously equals
\begin{equation*}
{n\choose n-1} B^*_{n-1}(x) (2\sqrt{9x^2-1}) \Big(-\frac{1}{2}\Big ) = -n \sqrt{9x^2-1}B_{n-1}^*(x).
\end{equation*}
So, the identities \eqref{fro1} and \eqref{gou1} are actually equivalent and the ``improvement'' is a trivial reformulation.
Nevertheless, to keep the notation simple, we will renounce the mod notation and work with the second formulation.

In this paper, we prove more relations between Bernoulli polynomials and balancing polynomials. The proofs are based
on our recent findings concerning exponential generating functions for these polynomials.
From these polynomial relations, we deduce new combinatorial identities with Fibonacci (Lucas) and Bernoulli numbers. Moreover, we prove a special identity involving Bernoulli polynomials
and Fibonacci numbers in arithmetic progression. Some consequences are stated as corollaries.

\section{New Bernoulli-balancing relations}

The next lemma \cite{5} deals with exponential generating functions for balancing and Lucas-balancing polynomials. It will play a key role in the first part of the paper.
\begin{lemma}\label{fundlem1} Let $b_1(x,z)$ and $b_2(x,z)$ be the exponential generating functions of odd and even indexed balancing polynomials, respectively.
Then
\begin{align*} \label{genf_b1}
 b_1(x,z) &= \sum_{n=0}^\infty B_{2n+1}^*(x) \frac{z^n}{n!} \nonumber \\
& = \frac{e^{(18x^2-1)z}}{\sqrt{9x^2-1}}\bigl( 3x \sinh (6x \sqrt{9x^2-1}z) + \sqrt{9x^2-1} \cosh (6x \sqrt{9x^2-1}z)\bigr)
\end{align*}
and
\begin{equation*} \label{genf_b2}
b_2(x,z) = \sum_{n=0}^\infty B_{2n}^*(x) \frac{z^n}{n!} = \frac{e^{(18x^2-1)z}}{\sqrt{9x^2-1}}\sinh (6x \sqrt{9x^2-1}z).
\end{equation*}

Similarly, we have for Lucas-balancing polynomials
\begin{align*} \label{genf_c1}
 c_1(x,z) &= \sum_{n=0}^\infty C_{2n+1}(x) \frac{z^n}{n!} \nonumber \\
& = e^{(18x^2-1)z} \bigl ( 3x \cosh (6x \sqrt{9x^2-1}z) + \sqrt{9x^2-1} \sinh (6x \sqrt{9x^2-1}z)\bigr)
\end{align*}
and
\begin{equation*} \label{genf_c2}
c_2(x,z) = \sum_{n=0}^\infty C_{2n}(x) \frac{z^n}{n!} = e^{(18x^2-1)z}\cosh (6x \sqrt{9x^2-1}z).
\end{equation*}
\end{lemma}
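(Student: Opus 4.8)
The plan is to derive Binet-type closed forms for both families of polynomials from their common characteristic equation $t^2 - 6xt + 1 = 0$, and then to substitute these into the four defining series and collapse the two resulting exponentials into hyperbolic functions. First I would set $\alpha = \alpha(x) = 3x + \sqrt{9x^2-1}$ and $\beta = \beta(x) = 3x - \sqrt{9x^2-1}$, the two roots of that quadratic, so that $\alpha+\beta = 6x$, $\alpha\beta = 1$, and $\alpha - \beta = 2\sqrt{9x^2-1}$. Matching the initial data $B_0^*(x)=0$, $B_1^*(x)=1$ and $C_0(x)=1$, $C_1(x)=3x$ then yields the standard Binet formulas $B_n^*(x) = (\alpha^n-\beta^n)/(\alpha-\beta)$ and $C_n(x) = (\alpha^n+\beta^n)/2$, each confirmed by checking that the right-hand side satisfies the given recurrence together with the two initial conditions.

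The crucial observation is that restricting to even or odd indices replaces $\alpha,\beta$ by $\alpha^2,\beta^2$ inside the exponential series. For the even-indexed balancing generating function I would write
\[
b_2(x,z) = \sum_{n=0}^\infty \frac{\alpha^{2n}-\beta^{2n}}{\alpha-\beta}\,\frac{z^n}{n!} = \frac{1}{\alpha-\beta}\bigl(e^{\alpha^2 z} - e^{\beta^2 z}\bigr),
\]
and analogously $c_2(x,z) = \tfrac12(e^{\alpha^2 z}+e^{\beta^2 z})$, while the odd-indexed sums carry an extra factor $\alpha$ (respectively $\beta$) on each term, giving $b_1(x,z) = (\alpha e^{\alpha^2 z}-\beta e^{\beta^2 z})/(\alpha-\beta)$ and $c_1(x,z) = \tfrac12(\alpha e^{\alpha^2 z}+\beta e^{\beta^2 z})$.

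The remaining step is to compute the squares: $\alpha^2 = (18x^2-1) + 6x\sqrt{9x^2-1}$ and $\beta^2 = (18x^2-1) - 6x\sqrt{9x^2-1}$, so both share the common part $18x^2-1$ and differ only in the sign of $6x\sqrt{9x^2-1}$. Factoring $e^{(18x^2-1)z}$ out of every exponential converts the resulting $\pm$ pair into $\cosh$ and $\sinh$ of $6x\sqrt{9x^2-1}z$. For $b_2$ and $c_2$ this yields the claimed $\sinh$ and $\cosh$ formulas at once; for $b_1$ and $c_1$ I would substitute $\alpha = 3x+\sqrt{9x^2-1}$ and $\beta = 3x-\sqrt{9x^2-1}$, then group the $3x$-terms into one hyperbolic function and the $\sqrt{9x^2-1}$-terms into the other via $e^{w}\pm e^{-w}$ with $w = 6x\sqrt{9x^2-1}z$.

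No part of this is a genuine obstacle; the only point requiring care is the bookkeeping in the two odd-indexed cases, where the asymmetric coefficients $\alpha$ and $\beta$ must be split correctly so that the $3x$-contributions and the $\sqrt{9x^2-1}$-contributions land in the intended $\sinh$ and $\cosh$ terms. I would close by remarking that all four series are finite linear combinations of entire exponential functions and hence converge for every $z\in\mathbb{C}$, so the term-by-term manipulations above are valid without any restriction on $z$.
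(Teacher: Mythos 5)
Your proposal is correct: the Binet forms $B_n^*(x)=(\alpha^n-\beta^n)/(\alpha-\beta)$ and $C_n(x)=(\alpha^n+\beta^n)/2$ with $\alpha,\beta=3x\pm\sqrt{9x^2-1}$ do follow from the recurrences and initial data, the computation $\alpha^2,\beta^2=(18x^2-1)\pm 6x\sqrt{9x^2-1}$ is right, and regrouping the exponentials reproduces all four stated generating functions. Note that the paper itself offers no proof of this lemma — it is imported verbatim from reference \cite{5} — so there is nothing internal to compare against; your Binet-formula derivation is the standard route and is complete. The only point worth a parenthetical remark is the degenerate case $x=\pm 1/3$, where $\alpha=\beta$ and both your formula for $B_n^*$ and the stated closed form must be read as limits, since $\sqrt{9x^2-1}$ appears in the denominators.
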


We start with the following results involving even indexed balancing polynomials.
\begin{theorem}\label{thm1}
For each $n\geq 0$ and $x\in\mathbb{C}$, we have
\begin{equation}\label{maineq1}
\sum_{k=0}^n {n\choose k} (12x\sqrt{9x^2-1})^{n-k} B_{n-k}B^*_{2k}(x) = 6x n \bigl(C_{2n-2}(x) - \sqrt{9x^2-1}B_{2n-2}^*(x)\bigr).
\end{equation}
\end{theorem}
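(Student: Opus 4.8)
The plan is to read both sides of \eqref{maineq1} as coefficients in exponential generating functions and to reduce the whole identity to one product simplification. Throughout I would write $d=\sqrt{9x^2-1}$ and $a=12xd$, so that the weight on the left is $a^{n-k}=(12x\sqrt{9x^2-1})^{n-k}$. The first observation is that the left-hand side is a binomial convolution: scaling $z\mapsto az$ in the defining series for the Bernoulli numbers gives $\sum_{m\ge 0}a^m B_m\frac{z^m}{m!}=\frac{az}{e^{az}-1}$, while Lemma~\ref{fundlem1} supplies $b_2(x,z)=\sum_{k\ge 0}B^*_{2k}(x)\frac{z^k}{k!}$. Hence the coefficient of $z^n/n!$ in the product $\frac{az}{e^{az}-1}\,b_2(x,z)$ is exactly $\sum_{k=0}^n\binom{n}{k}a^{n-k}B_{n-k}B^*_{2k}(x)$, i.e.\ the left-hand side of \eqref{maineq1}.

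The crucial step is to collapse this product. Using $\sinh(6xdz)=\tfrac12\bigl(e^{6xdz}-e^{-6xdz}\bigr)$ and setting $p=18x^2-1+6xd$ and $q=18x^2-1-6xd$, the Lemma gives $b_2(x,z)=\frac{1}{2d}\bigl(e^{pz}-e^{qz}\bigr)$. The key algebraic fact is that $p-q=12xd=a$, so that $e^{pz}-e^{qz}=e^{qz}\bigl(e^{az}-1\bigr)$ and the factor $e^{az}-1$ cancels:
\begin{equation*}
\frac{az}{e^{az}-1}\,b_2(x,z)=\frac{az}{e^{az}-1}\cdot\frac{e^{qz}\bigl(e^{az}-1\bigr)}{2d}=\frac{az}{2d}\,e^{qz}=6xz\,e^{(18x^2-1-6xd)z}.
\end{equation*}
Expanding $6xz\,e^{qz}=6x\sum_{j\ge 0}\frac{q^{j}}{j!}z^{j+1}$ and reading off the coefficient of $z^n/n!$, I obtain that the left-hand side equals $6xn\,q^{n-1}=6xn\,(18x^2-1-6xd)^{n-1}$ for $n\ge 1$ (and $0$ for $n=0$).

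It then remains to match the right-hand side, which I would also do at the level of generating functions to avoid introducing Binet-type formulas. Since $n\,z^n/n!=z\cdot z^{n-1}/(n-1)!$, the EGF of the right-hand side is
\begin{equation*}
\sum_{n\ge 0}6xn\bigl(C_{2n-2}(x)-d\,B^*_{2n-2}(x)\bigr)\frac{z^n}{n!}=6xz\bigl(c_2(x,z)-d\,b_2(x,z)\bigr),
\end{equation*}
and by Lemma~\ref{fundlem1} one has $c_2(x,z)-d\,b_2(x,z)=e^{(18x^2-1)z}\bigl(\cosh(6xdz)-\sinh(6xdz)\bigr)=e^{(18x^2-1-6xd)z}=e^{qz}$. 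Thus the right-hand side has EGF $6xz\,e^{qz}$ as well, the two generating functions coincide, and \eqref{maineq1} follows by comparing coefficients of $z^n/n!$.

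I expect the only genuine obstacle to be spotting the cancellation $p-q=a$, which is what causes the Bernoulli factor $\frac{az}{e^{az}-1}$ to disappear; this rests on the arithmetic coincidence that the prefactor $12x\sqrt{9x^2-1}$ in the theorem is precisely the spread of the two exponents produced by the $\sinh$ in $b_2(x,z)$. Everything after that is routine bookkeeping, the main helper identity being $\cosh(6xdz)-\sinh(6xdz)=e^{-6xdz}$, which governs the right-hand side.
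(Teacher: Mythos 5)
Your proposal is correct and follows essentially the same route as the paper: both form the product $b_2(x,z)\,H(0,12x\sqrt{9x^2-1}\,z)$ and exploit the same cancellation — the paper phrases it as $\sinh u\,(\coth u-1)=e^{-u}$ while you phrase it as $e^{pz}-e^{qz}=e^{qz}(e^{az}-1)$ with $p-q=a$, which is the identical simplification. Your extra intermediate closed form $6xn(18x^2-1-6x\sqrt{9x^2-1})^{n-1}$ and the separate EGF check of the right-hand side are just a slightly longer bookkeeping path to the same conclusion.
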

\begin{proof}
From
\begin{equation*}
\frac{2}{e^{2x}-1} = \coth x-1
\end{equation*}
we get
\begin{equation*}
H(0,12x\sqrt{9x^2-1}z) = 6xz\sqrt{9x^2-1}\bigl (\coth(6x\sqrt{9x^2-1}z) - 1\bigr).
\end{equation*}

This yields
\begin{align*}
\sum_{n=0}^\infty \Big (\sum_{k=0}^n {n\choose k}&\bigl(12x\sqrt{9x^2-1})^{n-k} B_{n-k}B^*_{2k}(x)\Big)\frac{z^n}{n!} \\
&=  b_2(x,z) H(0,12x\sqrt{9x^2-1}z) \\
& = 6xz e^{(18x^2-1)z}\bigl(\cosh (6x \sqrt{9x^2-1}z) - \sinh (6x \sqrt{9x^2-1}z)\bigr) \\
& = 6xz c_2(x,z) - 6x \sqrt{9x^2-1} z\, b_2(x,z) \\
& = 6x \sum_{n=0}^\infty n \bigl(C_{2n-2}(x) - \sqrt{9x^2-1} B_{2n-2}^*(x)\bigr) \frac{z^n}{n!}.
\end{align*}

The proof is complete.
\end{proof}
\begin{corollary}\label{cor1}
For each $n\geq 0$, the following relation holds
\begin{equation*}\label{eqcor1}
\sum_{k=0}^n {n\choose k}(24\sqrt{2})^{n-k} B_{2k}^*B_{n-k} = 6 n \bigl(C_{2n-2} - 2\sqrt{2}B_{2n-2}^*\bigr).
\end{equation*}
\end{corollary}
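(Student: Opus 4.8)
The plan is to obtain this corollary as the immediate specialization $x=1$ of Theorem~\ref{thm1}. The crucial point, already noted in the preliminaries, is that the balancing and Lucas-balancing \emph{numbers} arise from the corresponding \emph{polynomials} through the evaluations $B_n^*(1)=B_n^*$ and $C_n(1)=C_n$. Thus substituting $x=1$ into the polynomial identity \eqref{maineq1} will automatically turn every polynomial term into the number it is meant to represent, while the Bernoulli numbers $B_{n-k}$ on the left are unaffected.

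The remaining task is purely a matter of simplifying the three $x$-dependent quantities at $x=1$. First I would compute $9x^2-1=8$, whence $\sqrt{9x^2-1}=\sqrt{8}=2\sqrt{2}$. This single simplification feeds all three places: the base of the power on the left becomes $12x\sqrt{9x^2-1}=12\cdot 2\sqrt{2}=24\sqrt{2}$; the prefactor $6x$ becomes $6$; and the coefficient $\sqrt{9x^2-1}$ that multiplies $B_{2n-2}^*(x)$ on the right becomes $2\sqrt{2}$. Substituting these values and replacing $B_{2k}^*(1)$, $C_{2n-2}(1)$, and $B_{2n-2}^*(1)$ by $B_{2k}^*$, $C_{2n-2}$, and $B_{2n-2}^*$ respectively then reproduces \eqref{maineq1} verbatim as the claimed relation.

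Since the argument is nothing more than a direct evaluation of a result already established, I foresee no substantive obstacle. The only point that warrants a moment's care is keeping the repeated radical $\sqrt{8}=2\sqrt{2}$ consistent, so that it appears correctly both inside the base $24\sqrt{2}$ on the left-hand side and as the standalone factor $2\sqrt{2}$ in front of $B_{2n-2}^*$ on the right-hand side. No new generating-function computation beyond the one used to prove Theorem~\ref{thm1} is required.
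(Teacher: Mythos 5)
Your proof is correct and is exactly the paper's argument: the paper proves Corollary~\ref{cor1} by setting $x=1$ in \eqref{maineq1}, and your simplifications $\sqrt{9x^2-1}=2\sqrt{2}$, $12x\sqrt{9x^2-1}=24\sqrt{2}$, and $6x=6$ are all accurate.
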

\begin{proof}
Set $x=1$ in \eqref{maineq1}.
\end{proof}
\begin{corollary}\label{cor2}
For each $n\geq 0$ and $j\geq 1$,
\begin{equation}\label{eqcor2}
\sum_{k=0}^n {n\choose k}(\sqrt{5}F_{2j})^{n-k}F_{2kj} B_{n-k} = \frac{n}{2}F_{2j} \bigl(L_{2j(n-1)} - \sqrt{5}F_{2j(n-1)}\bigr).
\end{equation}
\end{corollary}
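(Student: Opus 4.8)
The plan is to specialize the polynomial identity \eqref{maineq1} of Theorem~\ref{thm1} to the arguments furnished by the link formulas \eqref{link1} and \eqref{link2}, thereby turning balancing and Lucas-balancing polynomials into Fibonacci and Lucas numbers. Two elementary closed forms will drive every simplification: the Fibonacci--Lucas relation $L_r^2 - 5F_r^2 = 4(-1)^r$, which controls the radical $\sqrt{9x^2-1}$, and the duplication rule $F_r L_r = F_{2r}$, which collapses the products appearing in the base of the power and on the right-hand side. I expect the argument to split according to the parity of $j$, since \eqref{link1} handles even indices and \eqref{link2} handles odd ones; note that the index ``$2j$'' in the statement turns out to be twice the index appearing in the link formulas, so one must keep careful track of which Fibonacci subscript is which.

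First I would treat even $j$, say $j=2m$, by substituting $x=L_{2m}/6$ into \eqref{maineq1}. Then $9x^2-1=(L_{2m}^2-4)/4=5F_{2m}^2/4$, so $\sqrt{9x^2-1}=\tfrac{\sqrt5}{2}F_{2m}$ and $12x\sqrt{9x^2-1}=\sqrt5\,F_{2m}L_{2m}=\sqrt5\,F_{4m}$. Using \eqref{link1} one has $B_{2k}^*(x)=F_{4mk}/F_{2m}$, $C_{2n-2}(x)=L_{4m(n-1)}/2$, and $B_{2n-2}^*(x)=F_{4m(n-1)}/F_{2m}$, while $6x=L_{2m}$. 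Writing $2j=4m$ and clearing the common factor $1/F_{2m}$ from both sides (i.e.\ multiplying through by $F_{2m}$ and using $F_{2m}L_{2m}=F_{4m}=F_{2j}$) should produce \eqref{eqcor2} verbatim.

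For odd $j$, say $j=2m+1$, I would instead substitute the imaginary argument $x=\tfrac{i}{6}L_{2m+1}$ and invoke \eqref{link2}. Here $9x^2-1=-(L_{2m+1}^2+4)/4=-5F_{2m+1}^2/4$, so $\sqrt{9x^2-1}=\tfrac{i\sqrt5}{2}F_{2m+1}$ and $12x\sqrt{9x^2-1}=-\sqrt5\,F_{2m+1}L_{2m+1}=-\sqrt5\,F_{2(2m+1)}$, with $2j=2(2m+1)$. The evaluations from \eqref{link2} now carry powers of $i$: with $j=2m+1$ one gets $B_{2k}^*(x)=i^{2k-1}F_{2kj}/F_j$, $C_{2n-2}(x)=i^{2n-2}L_{2j(n-1)}/2$, and $B_{2n-2}^*(x)=i^{2n-3}F_{2j(n-1)}/F_j$.

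The main obstacle is precisely this bookkeeping of the powers of $i$ together with the sign $(-1)^{n-k}$ coming from the negative base. On the left, each summand picks up $(-1)^{n-k}$ from $(12x\sqrt{9x^2-1})^{n-k}$ and $i^{2k-1}$ from $B_{2k}^*(x)$, and their product simplifies to the $k$-independent factor $(-1)^{n+1}i$. On the right, the prefactor $6x=iL_j$ multiplies $C_{2n-2}(x)$ and $\sqrt{9x^2-1}\,B_{2n-2}^*(x)$, in both of which the power of $i$ equals $i^{2n-2}=(-1)^{n-1}$ (the latter contributing $i\cdot i^{2n-3}$), so the right-hand side carries the overall factor $(-1)^{n-1}i$. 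Since $(-1)^{n+1}=(-1)^{n-1}$, the factor $i$ and the sign cancel from both sides; multiplying through by $F_j$ and using $F_jL_j=F_{2j}$ then yields \eqref{eqcor2} once more. Because even $j=2m$ ($m\geq1$) and odd $j=2m+1$ ($m\geq0$) together exhaust all $j\geq1$, these two cases complete the proof. As a safeguard I would verify the branch choice for $\sqrt{9x^2-1}$ and check the smallest case $j=1$ (where $x=i/6$) to confirm that no stray sign survives.
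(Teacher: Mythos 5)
Your proposal is correct and follows exactly the paper's own route: the published proof of Corollary~\ref{cor2} likewise evaluates \eqref{maineq1} at $x=\frac{1}{6}L_{2m}$ and $x=\frac{i}{6}L_{2m+1}$, invokes the links \eqref{link1}--\eqref{link2}, and simplifies the radical via $L_n^2=5F_n^2+(-1)^n4$. You merely carry out in full the sign and power-of-$i$ bookkeeping that the paper leaves implicit, and your computations check out.
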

\begin{proof} Evaluate \eqref{maineq1} at the points $x=i/6 L_{2m+1}$ and $x=1/6 L_{2m}$, respectively, and use the links from \eqref{link1} and \eqref{link2}. To simplify the square root recall that $L_n^2=5F_n^2+(-1)^{n}4$.
\end{proof}

The special case
\begin{equation*}
\sum_{k=0}^n {n\choose k} (\sqrt{5})^{n-k} F_{2k}B_{n-k} = \frac{n}{2} \bigl(L_{2n-2} - \sqrt{5}F_{2n-2}\bigr)
\end{equation*}
appears as equation (22) in \cite{3}. We will derive an extension of this result in a sequel.
\begin{remark}
By reindexing, we can write \eqref{maineq1} as follows
\begin{equation}\label{eq1_mod}
\sum_{k=0}^{\lrf{{n}/{2}}}{n\choose 2k}\bigl(144x^2(9x^2-1)\bigr)^kB_{2k}B^*_{2(n-2k)}(x) =6nxC_{2(n-1)}(x).
\end{equation}
\end{remark}

Another interesting identity involving even indexed balancing polynomials is our next theorem.
\begin{theorem} \label{thm2}
For each $n\geq 0$ and $x\in\mathbb{C}$, we have the relation
\begin{gather}
\sum_{k=0}^n {n\choose k} (12x\sqrt{9x^2-1})^{n-k} B^*_{2k}(x)B_{n-k}(x)
\label{maineq2}= 6nx \bigl(18x^2 - 1 + 6x (2x-1)\sqrt{9x^2-1} \bigr)^{n-1}.
\end{gather}
\end{theorem}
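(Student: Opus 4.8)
The plan is to run the same generating-function machinery used for Theorem~\ref{thm1}, but with the Bernoulli \emph{polynomial} generating function $H(x,\cdot)$ replacing the Bernoulli \emph{number} function $H(0,\cdot)$. The left-hand side of \eqref{maineq2} is a binomial convolution of the sequences $\{B^*_{2k}(x)\}_k$ and $\{(12x\sqrt{9x^2-1})^m B_m(x)\}_m$, so its exponential generating function is the product of the two individual EGFs. By Lemma~\ref{fundlem1} the first factor is $b_2(x,z)$, and the second is $H(x,12x\sqrt{9x^2-1}z)$, because weighting $B_m(x)$ by $(12x\sqrt{9x^2-1})^m$ is exactly the rescaling $z\mapsto 12x\sqrt{9x^2-1}z$ inside $H(x,z)$. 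First I would therefore record
\begin{equation*}
\sum_{n=0}^\infty\Big(\sum_{k=0}^n\binom{n}{k}(12x\sqrt{9x^2-1})^{n-k}B^*_{2k}(x)B_{n-k}(x)\Big)\frac{z^n}{n!}=b_2(x,z)\,H(x,12x\sqrt{9x^2-1}z).
\end{equation*}

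Next I would abbreviate $u=6x\sqrt{9x^2-1}z$, so the rescaled argument is $2u$ and, using $H(x,w)=we^{xw}/(e^w-1)$, the second factor is $H(x,2u)=\frac{2u\,e^{2ux}}{e^{2u}-1}$, while Lemma~\ref{fundlem1} gives $b_2(x,z)=\frac{e^{(18x^2-1)z}}{\sqrt{9x^2-1}}\sinh u$. The decisive step—the one I expect to carry all the weight—is the algebraic collapse of this product. Writing $\sinh u=\tfrac12(e^u-e^{-u})$ and using the factorization $e^u-e^{-u}=e^{-u}(e^{2u}-1)$, the denominator $e^{2u}-1$ cancels exactly, so that $\frac{e^u-e^{-u}}{e^{2u}-1}=e^{-u}$ and
\begin{equation*}
b_2(x,z)\,H(x,2u)=\frac{u}{\sqrt{9x^2-1}}\,e^{(18x^2-1)z}\,e^{(2x-1)u}.
\end{equation*}
Since $u/\sqrt{9x^2-1}=6xz$ and $(18x^2-1)z+(2x-1)u=z\bigl(18x^2-1+6x(2x-1)\sqrt{9x^2-1}\bigr)$, the entire right-hand side becomes $6xz\,e^{\lambda z}$ with $\lambda=18x^2-1+6x(2x-1)\sqrt{9x^2-1}$.

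Finally I would expand $6xz\,e^{\lambda z}=6x\sum_{m\geq0}\lambda^m z^{m+1}/m!$ and reindex $n=m+1$ to get $\sum_{n\geq0}6xn\lambda^{n-1}z^n/n!$, the $n=0$ term vanishing automatically because of the factor $n$. Comparing the coefficient of $z^n/n!$ with the product identity above then yields \eqref{maineq2}. The only genuine subtlety is the cancellation $\frac{e^u-e^{-u}}{e^{2u}-1}=e^{-u}$, which is what makes the messy quotient of hyperbolic/exponential terms telescope into a single clean exponential; once that is in hand, everything downstream is routine bookkeeping of the exponent and a shift of the summation index.
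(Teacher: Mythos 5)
Your proof is correct and follows essentially the same route as the paper: both form the product $b_2(x,z)\,H(x,12x\sqrt{9x^2-1}z)$ and exploit the cancellation of the $\sinh$ factor against the denominator of $H$ (the paper packages this as $H(x,z)=\frac{z}{2}\frac{e^{(x-1/2)z}}{\sinh\frac{z}{2}}$, which is your identity $\frac{e^u-e^{-u}}{e^{2u}-1}=e^{-u}$ in disguise), leaving the single exponential $6xz\,e^{\lambda z}$. The remaining bookkeeping matches the paper's conclusion.
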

\begin{proof}
Since $H(x,z) = \frac{z}{2} \frac{e^{(x-1/2)z}}{\sinh\!\frac{z}{2}}$,
it follows that
\begin{align*}
\sum_{n=0}^\infty \Bigl(\sum_{k=0}^n {n\choose k} (12x\sqrt{9x^2-1})^{n-k}& B^*_{2k}(x) B_{n-k}(x) \Bigr) \frac{z^n}{n!}\\
 & = b_2(x,z)H(x,12x\sqrt{9x^2-1}z) \\
& =  6xz e^{(18x^2-1)z + 6x(2x-1)\sqrt{9x^2-1}z} \\
& =  6x \sum_{n=0}^\infty n \big (18x^2 - 1 + 6x (2x-1)\sqrt{9x^2-1} \big )^{n-1} \frac{z^n}{n!}.
\end{align*}

The proof is complete.
\end{proof}
\begin{corollary}\label{cor3}
For each $n\geq 0$,
\begin{equation}\label{eqcor3}
\sum_{k=0}^n {n\choose k}(3\sqrt{5})^{n-k} \big ( 2^{1-(n-k)}-1\big )  F_{4k} B_{n-k} = 3 n\Big (\frac{7}{2}\Big )^{n-1}.
\end{equation}
\end{corollary}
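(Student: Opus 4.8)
The plan is to obtain \eqref{eqcor3} as the single specialization $x=1/2$ of Theorem~\ref{thm2}. The value $x=1/2$ is forced by two requirements. First, it should convert the balancing polynomials $B_{2k}^*(x)$ on the left into Fibonacci numbers: by the first link in \eqref{link1} with $m=1$ we have $L_2=3$, hence $x=L_2/6=1/2$, and then $B_{2k}^*(1/2)=F_{4k}/F_2=F_{4k}$ since $F_2=1$. Second, at $x=1/2$ one computes $9x^2-1=5/4$, so that $\sqrt{9x^2-1}=\sqrt5/2$ and the prefactor collapses to $12x\sqrt{9x^2-1}=3\sqrt5$, producing exactly the $(3\sqrt5)^{n-k}$ appearing in \eqref{eqcor3}.

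The remaining ingredient concerns the Bernoulli polynomials $B_{n-k}(x)$ on the left of \eqref{maineq2}. Here I would invoke the classical evaluation $B_m(1/2)=(2^{1-m}-1)B_m$, which follows from comparing the generating function $H(1/2,z)=ze^{z/2}/(e^z-1)$ with the combination $2H(0,z/2)-H(0,z)=z/(e^{z/2}-1)-z/(e^z-1)$ and the factorization $e^z-1=(e^{z/2}-1)(e^{z/2}+1)$. Substituting $m=n-k$ turns $B_{n-k}(1/2)$ into $(2^{1-(n-k)}-1)B_{n-k}$, precisely the weight multiplying $F_{4k}B_{n-k}$ in the claimed identity. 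With these two substitutions the left-hand side of \eqref{maineq2} becomes the left-hand side of \eqref{eqcor3}.

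It then remains to simplify the right-hand side $6nx\bigl(18x^2-1+6x(2x-1)\sqrt{9x^2-1}\bigr)^{n-1}$ at $x=1/2$. The pleasant feature — and the only point worth watching — is that the factor $2x-1$ vanishes identically at $x=1/2$, so the entire square-root contribution disappears and the base reduces to $18x^2-1=7/2$; together with $6nx=3n$ this yields $3n(7/2)^{n-1}$, as required. There is no real obstacle beyond correctly identifying $x=1/2$ as the value that simultaneously realizes the link to $F_{4k}$ and annihilates the $(2x-1)$ term; all subsequent steps are direct evaluations.
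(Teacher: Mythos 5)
Your proposal is correct and coincides with the paper's own proof: specialize Theorem~\ref{thm2} at $x=1/2$, use $B_{2k}^*(1/2)=F_{4k}$ and $B_m(1/2)=(2^{1-m}-1)B_m$, and note that $2x-1$ kills the square-root term on the right. The only cosmetic difference is that you rederive the half-argument Bernoulli evaluation and the link $B_{2k}^*(1/2)=F_{4k}$ instead of citing them.
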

\begin{proof}
Set $x=1/2$ in \eqref{maineq2} and use that $B_{2n}^*(1/2)\!=\!F_{4n}$ and $B_n(1/2)\!=\!(2^{1-n}-1)B_n$\break  \cite[Corollary 9.1.5]{1}.
\end{proof}

The last identity could be compared with
\begin{equation}\label{id1}
\sum_{k=0}^n {n\choose k} (\sqrt{5})^{n-k} \big ( 2^{1-(n-k)}-1 \big) F_{2k}B_{n-k} = n\Big (\frac{3}{2}\Big )^{n-1},
\end{equation}
which is equation (30) in \cite{3}. It is maybe worth remarking, that the value $x=-1/2$ in conjunction with $B_n^*(-x)=(-1)^{n+1}B_n^*(x)$ \cite{2} and the difference equation for Bernoulli polynomials $B_n(x+1)-B_n(x) =  nx^{n-1}$ \cite[Proposition 9.1.3]{1} gives
\begin{equation*}
\sum_{k=0}^n {n\choose k} (-3\sqrt{5})^{n-k} F_{4k}\big (( 2^{1-(n-k)}-1) B_{n-k}
+ (n-k)(-1)^{n-k}2^{1-(n-k)}\big ) = 3n \Big (\frac{7+6\sqrt{5}}{2}\Big )^{n-1}.
\end{equation*}

So, by Corollary \ref{cor3}, we end with
\begin{equation*}
\sum_{k=0}^n {n\choose k} (n-k) (3\sqrt{5})^{n-k} 2^{1-(n-k)} F_{4k}  = 3n \Bigl(\Big (\frac{7+6\sqrt{5}}{2}\Big )^{n-1}
- \Big (\frac{7}{2}\Big )^{n-1}\Bigr)
\end{equation*}
or, equivalently,
\begin{equation*}
\sum_{k=0}^n {n\choose k} k \Big (\frac{3\sqrt{5}}{2}\Big )^{k}F_{4(n-k)} = \frac{3n}{2^n} \bigl (({7+6\sqrt{5}})^{n-1}
- {7}^{n-1}\bigr).
\end{equation*}
\begin{theorem} \label{thm3}
For each $n\geq 0$ and $x\in\mathbb{C}$, we have the relation
\begin{equation}\label{maineq3}
\sum_{k=0}^{\lrf{{n}/{2}}}{n\choose 2k} (4^k-1)\bigl(144x^2(9x^2-1)\bigr)^k B_{2k} C_{2(n-2k)}(x) =6nx(9x^2-1)B^*_{2(n-1)}(x).
\end{equation}
\end{theorem}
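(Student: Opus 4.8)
The plan is to follow the generating-function method of Theorems \ref{thm1} and \ref{thm2}: I would recognize the left-hand side of \eqref{maineq3} as the coefficient of $z^n/n!$ in a product of two exponential generating functions and then collapse that product into a multiple of $z\,b_2(x,z)$. Writing $t = 12x\sqrt{9x^2-1}\,z$, the factor $\bigl(144x^2(9x^2-1)\bigr)^k$ is just $t^{2k}/z^{2k}$, so the even-indexed, $(4^k-1)$-weighted Bernoulli part is the even power series
\begin{equation*}
P(z) = \sum_{k=0}^\infty (4^k-1)\,B_{2k}\,\frac{t^{2k}}{(2k)!},
\end{equation*}
which contains only even powers of $z$. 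Since $C_{2(n-2k)}(x)$ is precisely the coefficient pattern of $c_2(x,z)=\sum_{m\ge0}C_{2m}(x)z^m/m!$, the Cauchy product gives
\begin{equation*}
P(z)\,c_2(x,z) = \sum_{n=0}^\infty\Bigl(\sum_{k=0}^{\lrf{n/2}}{n\choose 2k}(4^k-1)\bigl(144x^2(9x^2-1)\bigr)^k B_{2k}\,C_{2(n-2k)}(x)\Bigr)\frac{z^n}{n!},
\end{equation*}
so everything reduces to evaluating $P(z)\,c_2(x,z)$ in closed form.

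The crux is a closed form for $P(z)$, and I expect this to be the only genuinely nontrivial step. Averaging $H(0,t)$ and $H(0,-t)$ extracts the even part of $z/(e^z-1)$, giving $\sum_{k\ge0}B_{2k}\,t^{2k}/(2k)! = \frac{t}{2}\coth\frac{t}{2}$; replacing $t$ by $2t$ manufactures the $4^k$ weight, $\sum_{k\ge0}4^kB_{2k}\,t^{2k}/(2k)! = t\coth t$. Subtracting and invoking the duplication formula $\coth t = \tfrac12\bigl(\coth\tfrac t2+\tanh\tfrac t2\bigr)$ telescopes the difference into a single hyperbolic tangent,
\begin{equation*}
P(z) = t\coth t - \frac{t}{2}\coth\frac{t}{2} = \frac{t}{2}\tanh\frac{t}{2}.
\end{equation*}
Identifying the $(4^k-1)$ weight with the dyadic scaling $t\mapsto 2t$ and then recognizing the collapse to $\tfrac t2\tanh\tfrac t2$ is the creative part; the rest is bookkeeping.

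With $s := t/2 = 6x\sqrt{9x^2-1}\,z$ we have $P(z) = s\tanh s = s\,\sinh s/\cosh s$, and multiplying by $c_2(x,z)=e^{(18x^2-1)z}\cosh s$ from Lemma \ref{fundlem1} cancels the $\cosh s$:
\begin{equation*}
P(z)\,c_2(x,z) = s\,e^{(18x^2-1)z}\sinh s = 6x\sqrt{9x^2-1}\,z\cdot e^{(18x^2-1)z}\sinh\!\bigl(6x\sqrt{9x^2-1}\,z\bigr).
\end{equation*}
By Lemma \ref{fundlem1} the trailing factor equals $\sqrt{9x^2-1}\,b_2(x,z)$, whence $P(z)\,c_2(x,z) = 6x(9x^2-1)\,z\,b_2(x,z)$. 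Since $z\,b_2(x,z)=\sum_{n\ge0}n\,B^*_{2(n-1)}(x)\,z^n/n!$, comparing coefficients of $z^n/n!$ yields $6nx(9x^2-1)B^*_{2(n-1)}(x)$, which is exactly \eqref{maineq3}. As a sanity check, the $k=0$ term is harmless because $(4^0-1)B_0=0$, matching the fact that $P(z)$ has no constant term.
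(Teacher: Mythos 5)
Your proof is correct and follows the paper's generating-function method: the paper's one-line proof (``combine $c_1(x,z)$ with $c_2(x,z)$'') amounts to the same computation, except that it would write $e^{(18x^2-1)z}\sinh(6x\sqrt{9x^2-1}z)$ as $\bigl(c_1(x,z)-3xc_2(x,z)\bigr)/\sqrt{9x^2-1}$ and then convert $C_{2n-1}(x)-3xC_{2n-2}(x)$ to $(9x^2-1)B^*_{2(n-1)}(x)$, whereas you identify that same expression directly with $\sqrt{9x^2-1}\,b_2(x,z)$ --- a purely cosmetic difference. The key step the paper leaves implicit, namely $\sum_{k\geq 0}(4^k-1)B_{2k}t^{2k}/(2k)!=\tfrac{t}{2}\tanh\tfrac{t}{2}$ via the even part of $H(0,t)$ and the duplication formula for $\coth$, is exactly right in your write-up.
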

\begin{proof}
Combine $c_1(x,z)$ with $c_2(x,z)$.
\end{proof}
\begin{remark}
Combining $b_2(x,z)$ with $c_1(x,z)$ yields
\begin{equation}
\sum_{k=0}^{\lrf{{n}/{2}}}{n\choose 2k} \bigl(144x^2(9x^2-1)\bigr)^kB_{2k}B^*_{2(n-2k)}(x) =
2n\big (C_{2n-1}(x)-(9x^2-1)B^*_{2(n-1)}(x)\big ).
\end{equation}
Since $C_n(x)=3xC_{n-1}(x)+(9x^2-1)B_{n-1}^*(x)$ \cite[Proposition 2.3]{2}, the right-hand side equals
$2n\big(C_{2n-1}(x)-(9x^2-1)B^*_{2(n-1)}(x)\big)=6nxC_{2(n-1)}(x)$, so we again have \eqref{eq1_mod}. Similarly,
relating $b_1(x,z)$ to $c_2(x,z)$ gives
\begin{gather*}
\sum_{k=0}^{\lrf{{n}/{2}}}{n\choose 2k} (4^k-1)\bigl(144x^2(9x^2-1)\bigr)^kB_{2k}C_{2(n-2k)}(x)\nonumber\\ =2n(9x^2-1)\left(B^*_{2n-1}(x)-C_{2(n-1)}(x)\right),
\end{gather*}
but, since $C_n(x)=B_{n+1}^*(x)-3xB_{n}^*(x)$ \cite[Proposition 2.3]{2}, the right-hand side equals
$2n(9x^2-1)\!\left(B^*_{2n-1}(x)-C_{2(n-1)}(x)\right)=6nx(9x^2-1)B^*_{2(n-1)}(x)$, and we again end with \eqref{maineq3}.
\end{remark}

The next identity is the counterpart of Corollary \ref{cor2}.
\begin{corollary}\label{cor4}
For each $n\geq 0$ and $j\geq 1$,
\begin{equation*}\label{eqcor4}
\sum_{k=0}^{\lrf{{n}/{2}}}{n\choose 2k}(20^{k}-5^{k})F_{2j}^{2k} B_{2k}L_{2j(n-2k)}
= \frac{5n}{2} F_{2j} F_{2j(n-1)}.
\end{equation*}
\end{corollary}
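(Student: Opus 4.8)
The plan is to mirror the proof of Corollary~\ref{cor2}, now specializing Theorem~\ref{thm3} (equation~\eqref{maineq3}) instead of \eqref{maineq1}. Concretely, I would evaluate \eqref{maineq3} at the two families of points $x=\frac{1}{6}L_{2m}$ and $x=\frac{i}{6}L_{2m+1}$ and invoke the links \eqref{link1} and \eqref{link2}. Writing $r$ for the common index ($r=2m$ in the first case, $r=2m+1$ in the second), these substitutions should produce the claimed identity with $2j=2r$, so that the real case covers even $j$ and the imaginary case covers odd $j$, giving the result uniformly for all $j\ge1$.

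The central computation is to simplify the coefficient $\bigl(144x^2(9x^2-1)\bigr)^k$. Using $L_r^2-5F_r^2=4(-1)^r$, I would verify that in both cases $144x^2(9x^2-1)=5F_{2r}^2$: for $x=\frac16 L_r$ (with $r$ even) one has $144x^2=4L_r^2$ and $9x^2-1=\frac54 F_r^2$, while for $x=\frac{i}{6}L_r$ (with $r$ odd) both factors change sign, $144x^2=-4L_r^2$ and $9x^2-1=-\frac54 F_r^2$, so the product is again $5L_r^2F_r^2=5F_{2r}^2$ since $F_{2r}=F_rL_r$. Consequently $(4^k-1)\bigl(144x^2(9x^2-1)\bigr)^k=5^k(4^k-1)F_{2r}^{2k}=(20^k-5^k)F_{2r}^{2k}$, which is exactly the weight in the corollary. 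For the Lucas-balancing factor the links give $C_{2(n-2k)}(x)=\frac12 L_{2r(n-2k)}$ in the real case and $\frac{(-1)^n}{2}L_{2r(n-2k)}$ in the imaginary case.

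For the right-hand side I would expand $6nx(9x^2-1)B^*_{2(n-1)}(x)$, again splitting into the two cases. In the real case this equals $nL_r\cdot\frac54 F_r^2\cdot\frac{F_{2r(n-1)}}{F_r}=\frac{5n}{4}F_{2r}F_{2r(n-1)}$ after using $F_{2r}=F_rL_r$; clearing the factor $\frac12$ on the left by multiplying the whole identity by $2$ then yields the stated $\frac{5n}{2}F_{2j}F_{2j(n-1)}$ with $j=r$.

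The main obstacle, exactly as in Corollary~\ref{cor2}, is the bookkeeping of the powers of $i$ and the resulting signs $(-1)^n$ in the imaginary case $x=\frac{i}{6}L_{2m+1}$. Here $B^*_{2(n-1)}(x)=i^{2n-3}\frac{F_{2r(n-1)}}{F_r}=(-1)^n i\,\frac{F_{2r(n-1)}}{F_r}$ and $9x^2-1<0$, so the right-hand side acquires a factor $(-1)^n$ (through $i^2=-1$) that must be seen to coincide with the factor $(-1)^n$ carried by $C_{2(n-2k)}(x)$ on the left. Once these cancel, the imaginary case collapses to the same clean identity as the real case, completing the proof.
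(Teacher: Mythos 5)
Your proposal is correct and follows essentially the same route as the paper: substitute $x=\tfrac16 L_{2m}$ and $x=\tfrac{i}{6}L_{2m+1}$ into \eqref{maineq3}, apply the links \eqref{link1}--\eqref{link2}, and simplify with $L_r^2=5F_r^2+(-1)^r4$ and $F_{2r}=F_rL_r$. Your sign bookkeeping in the imaginary case (the $(-1)^n$ from $i^{2n-3}$ on the right cancelling against the $(-1)^n$ from $C_{2(n-2k)}$ on the left) checks out, so the two cases indeed cover even and odd $j$ respectively.
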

\begin{proof}
Insert $x=i/6 L_{2m+1}$ and $x=1/6 L_{2m}$, respectively, in \eqref{maineq3} to get
\begin{gather*}
\sum_{k=1}^{\lrf{{n}/{2}}}{n\choose 2k}(4^{k}-1) B_{2k}\bigl(L_{2m}^4-4L_{2m}^2\bigr)^k L_{4m(n-2k)}=\frac{n}{2}\frac{L_{2m}}{F_{2m}}(L_{2m}^2-4)F_{4m(n-1)},\\
\sum_{k=1}^{\lrf{{n}/{2}}}{n\choose 2k}(4^{k}-1) B_{2k}\bigl(L_{2m}^4+4L_{2m}^2\bigr)^kL_{2(2m+1)(n-2k)}
=\frac{n}{2}\frac{L_{2m+1}}{F_{2m+1}}(L_{2m+1}^2+4)F_{2(2m+1)(n-1)}.
\end{gather*}

Simplify using $L_n^2=5F_n^2+(-1)^{n}4$ and $L_n F_n = F_{2n}$.
\end{proof}

When $j=1$, then
\begin{equation*}
\sum_{k=0}^{\lrf{{n}/{2}}}{n\choose 2k}(4^{k}-1) 5^k B_{2k}L_{2(n-2k)}=\frac{5n}{2}F_{2(n-1)},
\end{equation*}
which is equation (23) in \cite{3}. When $j=2$, then
\begin{equation*}
\sum_{k=0}^{\lrf{{n}/{2}}}{n\choose 2k}(4^{k}-1) 45^k B_{2k}L_{4(n-2k)}=\frac{15n}{2}F_{4(n-1)}.
\end{equation*}

We conclude the analysis with the following result.
\begin{theorem} \label{thm4}
For each $n\geq 0$ and $x\in\mathbb{C}$, we have the relations
\begin{gather} 
(6x)^{n-1}\sum_{k=0}^{\lrf{{n}/{2}}}{n\choose 2k} (36x^2-4)^k B_{2k}B^*_{n-2k}(x)\nonumber\\
\label{maineq41}= n\left(\sum_{k=0}^{n-1}{n-1\choose k}B^*_{2k+1}(x)-\frac{(6x)^n}{2}B^*_{n-1}(x)\right)
\end{gather}
and
\begin{gather} 
(6x)^{n-1}\sum_{k=0}^{\lrf{{n}/{2}}}{n\choose 2k} (4^k-1)(36x^2-4)^k B_{2k}C_{n-2k}(x)\nonumber\\
\label{maineq42} = n\left(\sum_{k=0}^{n-1}{n-1\choose k}C_{2k+1}(x)-\frac{(6x)^n}{2}C_{n-1}(x)\right).
\end{gather}
\end{theorem}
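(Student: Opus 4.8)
The plan is to prove both identities \eqref{maineq41} and \eqref{maineq42} by the generating-function method already used for Theorems~\ref{thm1}--\ref{thm3}, the new feature being that the polynomials $B^*_{n-2k}(x)$ and $C_{n-2k}(x)$ now carry indices of \emph{both} parities, so I will need the \emph{full} exponential generating functions rather than the even/odd pieces $b_1,b_2,c_1,c_2$ of Lemma~\ref{fundlem1}. Writing $s=\sqrt{9x^2-1}$ and using the Binet representation $B^*_n(x)=(\alpha^n-\beta^n)/(\alpha-\beta)$, $C_n(x)=(\alpha^n+\beta^n)/2$ with $\alpha,\beta=3x\pm s$, I first record the closed forms $\mathcal{B}(x,z):=\sum_{m\ge0}B^*_m(x)z^m/m!=e^{3xz}\sinh(sz)/s$ and $\mathcal{C}(x,z):=\sum_{m\ge0}C_m(x)z^m/m!=e^{3xz}\cosh(sz)$.

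For the left-hand sides I would first handle the inner sums without the prefactor $(6x)^{n-1}$. Since $36x^2-4=(2s)^2$, the weight $(36x^2-4)^kB_{2k}$ is the even part of a scaled Bernoulli series: $\sum_k(2s)^{2k}B_{2k}z^{2k}/(2k)!=\tfrac12\bigl(H(0,2sz)+H(0,-2sz)\bigr)=sz\coth(sz)$. Hence the first inner sum is the coefficient of $z^n/n!$ in $sz\coth(sz)\cdot\mathcal{B}(x,z)=z\cosh(sz)e^{3xz}=z\,\mathcal{C}(x,z)$, using $\coth(sz)\sinh(sz)=\cosh(sz)$. For \eqref{maineq42} the extra factor $(4^k-1)$, together with $4^k(36x^2-4)^k=(4s)^{2k}$, produces the combination $2sz\coth(2sz)-sz\coth(sz)$; multiplying by $\mathcal{C}(x,z)$ and simplifying with $\cosh(2sz)-\cosh^2(sz)=\sinh^2(sz)$ collapses it to $s^2z\,\mathcal{B}(x,z)$. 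The prefactor $(6x)^{n-1}$ is then absorbed by the substitution $z\mapsto 6xz$ and a division by $6x$, which (after recognizing $\mathcal{C}(x,6xz)=e^{18x^2z}\cosh(6xsz)=e^zc_2(x,z)$ and $\mathcal{B}(x,6xz)=e^{18x^2z}\sinh(6xsz)/s=e^zb_2(x,z)$) turns the two left sides into the generating functions $z e^z c_2(x,z)$ and $(9x^2-1)z e^z b_2(x,z)$.

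For the right-hand sides I compute the generating functions of the two terms separately. The binomial sum $\sum_{k}\binom{n-1}{k}B^*_{2k+1}(x)$ is a coefficient of $e^zb_1(x,z)$, so after the factor $n$ and an index shift its generating function is $ze^zb_1(x,z)$; likewise $\sum_k\binom{n-1}{k}C_{2k+1}(x)$ yields $ze^zc_1(x,z)$. The terms $n(6x)^nB^*_{n-1}(x)/2$ and $n(6x)^nC_{n-1}(x)/2$ give $3xz\,\mathcal{B}(x,6xz)=3xze^zb_2(x,z)$ and $3xz\,\mathcal{C}(x,6xz)=3xze^zc_2(x,z)$. It then remains to combine: the polynomial relations $C_{2n}(x)=B^*_{2n+1}(x)-3xB^*_{2n}(x)$ and $C_{2n+1}(x)-3xC_{2n}(x)=(9x^2-1)B^*_{2n}(x)$ (both instances of \cite[Proposition 2.3]{2}), which in generating-function form read $b_1-3xb_2=c_2$ and $c_1-3xc_2=(9x^2-1)b_2$, turn the right-hand generating functions into exactly $ze^zc_2(x,z)$ and $(9x^2-1)ze^zb_2(x,z)$. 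Comparing coefficients of $z^n/n!$ with the left-hand computation finishes both identities.

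The only genuinely delicate step is the left-hand convolution. I must correctly identify the even-Bernoulli factor as a $\coth$ series (and, for \eqref{maineq42}, as the difference $2sz\coth(2sz)-sz\coth(sz)$ arising from $(4s)^{2k}-(2s)^{2k}$), and then carry out the hyperbolic simplifications that make the $\coth$ cancel against $\mathcal{B}$ or $\mathcal{C}$; the remaining risk is bookkeeping the $(6x)^{n-1}$ rescaling $z\mapsto 6xz$ without sign or factor errors.
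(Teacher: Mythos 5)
Your proposal is correct and follows essentially the same route as the paper, whose proof is only the one-line hint ``combine $b_1(x,z)$ with $b(x,z)$'' (resp.\ $c_1$ with $c$): you form the convolution of the full generating function $b(x,z)$ (your $\mathcal{B}$) with the even Bernoulli series $sz\coth(sz)$, rescale $z\mapsto 6xz$, and match against $e^z b_1(x,z)$ using $b_1-3xb_2=c_2$ and $c_1-3xc_2=(9x^2-1)b_2$. All the hyperbolic simplifications and the $(6x)^{n-1}$ bookkeeping in your write-up check out, so it is a complete, correct filling-in of the argument the authors merely sketch.
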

\begin{proof}
For the first identity, combine $b_1(x,z)$ with $b(x,z)$, where $b(x,z)$ is the exponential generating function for $B_n^*(x)$ \cite{2},	
\begin{equation*}
b(x,z) = \sum_{n=0}^\infty B^*_n(x) \frac{z^n}{n!} = \frac{e^{3xz}}{\sqrt{9x^2-1}}\sinh (\sqrt{9x^2-1}z).
\end{equation*}

The second identity follows from relating $c_1(x,z)$ to $c(x,z)$ with
\begin{equation*}
c(x,z) = \sum_{n=0}^\infty C_n(x) \frac{z^n}{n!} = e^{3xz}\cosh (\sqrt{9x^2-1}z).
\end{equation*}
\end{proof}
\begin{corollary}\label{cor5}
For each $n\geq 0$ and $j\geq 1$,
\begin{gather*}\label{eqcor5}
\sum_{k=0}^{\lrf{{n}/{2}}}{n\choose 2k} (5F_j^2)^{k} F_{j(n-2k)}B_{2k}\\
= (-1)^{nj}\frac{n}{L_{j}^{n-1}}\sum_{k=0}^{n-1}{n-1\choose k} (-1)^{kj} F_{j(2k+1)}-\frac{n}{2}F_{j(n-1)}L_{j}, \\
\sum_{k=0}^{\lrf{{n}/{2}}}{n\choose 2k} (4^{k}-1)(5F_j^2)^{k} L_{j(n-2k)}B_{2k} \nonumber\\
= (-1)^{nj}\frac{n}{L_{j}^{n-1}}\sum_{k=0}^{n-1}{n-1\choose k} (-1)^{kj} L_{j(2k+1)}-\frac{n}{2}L_{j(n-1)}L_{j}.
\end{gather*}
\end{corollary}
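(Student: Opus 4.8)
The plan is to specialize the two polynomial identities of Theorem~\ref{thm4} at the arguments furnished by the links \eqref{link1} and \eqref{link2}, in the same spirit as the proofs of Corollaries~\ref{cor2} and \ref{cor4}. The first displayed formula will come from \eqref{maineq41} and the second from \eqref{maineq42}. To reach every $j\ge 1$ at once I would split into two families: the even indices $j=2m$, treated by setting $x=L_{2m}/6$ and using \eqref{link1}, and the odd indices $j=2m+1$, treated by setting $x=\tfrac{i}{6}L_{2m+1}$ and using \eqref{link2}.

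First I would dispose of the elementary factors. In the even case $6x=L_j$, so $(6x)^{n-1}=L_j^{n-1}$ and $(6x)^{n}=L_j^{n}$, while $36x^2-4=L_j^2-4=5F_j^2$ by $L_n^2=5F_n^2+(-1)^n 4$; in the odd case $6x=iL_j$, so $(6x)^{n-1}=i^{n-1}L_j^{n-1}$ and $(6x)^{n}=i^{n}L_j^{n}$, while $36x^2-4=-L_j^2-4=-5F_j^2$, giving $(36x^2-4)^k=(-1)^k(5F_j^2)^k$. Substituting the even-index link values $B^*_p=F_{jp}/F_j$ and $C_p=L_{jp}/2$ and then clearing the common factors $F_j$ and $\tfrac{1}{2}$ produces exactly the even-$j$ instance of each asserted identity, with every sign equal to $+1$; a final application of $L_nF_n=F_{2n}$ tidies the closed terms. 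This already matches the stated coefficients, since $(-1)^{nj}=(-1)^{kj}=1$ whenever $j$ is even.

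The delicate part, and the step I expect to be the main obstacle, is the odd case, where the bookkeeping of the powers of $i$ must be done with care. Here $B^*_p(\tfrac{i}{6}L_j)$ carries a factor $i^{p-1}$ and $C_p(\tfrac{i}{6}L_j)$ a factor $i^{p}$. In \eqref{maineq41} the $k$-th term on the left accumulates $i^{n-1}\cdot i^{(n-2k)-1}\cdot(-1)^k=(-1)^{n-1}$, while on the right each summand of $\sum_k\binom{n-1}{k}B^*_{2k+1}$ contributes $i^{2k}=(-1)^k$ and the term $\tfrac{(6x)^n}{2}B^*_{n-1}$ contributes $i^{2n-2}=(-1)^{n-1}$. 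Dividing through by the common factor $(-1)^{n-1}L_j^{n-1}$ (and by $F_j$) leaves the main sum with a global sign $(-1)^{n-1}$ and each inner Fibonacci term with a sign $(-1)^k$; the inner factor is exactly $(-1)^{kj}$ for odd $j$ (matching the even case), while the global factor reads $(-1)^{(n-1)j}$, which collapses to $+1$ for even $j$ and so must be tracked separately. For \eqref{maineq42} the computation is identical except that every term now carries one extra odd power of $i$, leaving a single factor $i$ common to both sides that cancels, after which the same residual signs survive. The one subtlety worth stressing is that, because the right-hand sums in Theorem~\ref{thm4} mix even- and odd-indexed balancing (Lucas-balancing) values, the powers of $i$ do not cancel termwise as they conveniently did in Corollaries~\ref{cor2} and \ref{cor4}; keeping exact track of them is what pins down the residual sign factors.
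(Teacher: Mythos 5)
Your route is exactly the paper's: the printed proof of Corollary~\ref{cor5} is the one-line instruction to set $x=\frac{i}{6}L_{2m+1}$ and $x=\frac{1}{6}L_{2m}$ in \eqref{maineq41} and \eqref{maineq42} and ``simplify as before,'' and your even/odd case split together with the bookkeeping of the powers of $i$ is precisely that simplification written out. One point needs to be made explicit, though: your (correct) tracking of signs in the odd case produces the global factor $(-1)^{(n-1)j}$, whereas the corollary as printed carries $(-1)^{nj}$. This is not an error on your side but a sign slip in the stated result: for $n=1$, $j=1$ the first printed identity would assert $1=-1$, and for $n=3$, $j=1$ it would assert $\tfrac{9}{2}=-\tfrac{15}{2}$, while with $(-1)^{(n-1)j}$ both sides agree ($\tfrac{9}{2}=3(F_1-2F_3+F_5)-\tfrac{3}{2}F_2L_1$). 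The same check applies to the Lucas version. So your derivation is sound and in fact corrects the statement; you should say explicitly that the identities you obtain carry $(-1)^{(n-1)j}$ in place of $(-1)^{nj}$, rather than leaving the discrepancy unremarked.
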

\begin{proof}
Set $x=i/6 L_{2m+1}$ and $x=1/6 L_{2m}$ in \eqref{maineq41} and \eqref{maineq42}, respectively, and simplify as before.
\end{proof}

\section{A special polynomial identity}

Equations \eqref{eqcor2}, \eqref{eqcor3} and \eqref{id1} give rise to the question, if there is a connection between them. The answer to that question is positive, as will be shown in the next theorem. The theorem generalizes Theorem 9 in \cite{3}, which has been generalized in a different way in \cite{4}.
The proof of the extension presented here does not require the notion of balancing polynomials.
\begin{theorem} \label{thm5}
Let $\alpha$ be the golden ratio, $\alpha=(1+\sqrt{5})/2$, and $\beta=(1-\sqrt{5})/2=-1/\alpha$. Then, for each $n\geq 0$, $j\geq 1$, and $x\in\mathbb{C}$, we have the relations
\begin{equation}\label{main51}
\sum_{k=0}^n {n\choose k} F_{jk} (\sqrt{5}F_j)^{n-k} B_{n-k}(x) = n F_j \big ((\sqrt{5}x+\beta)F_j + F_{j-1}\big )^{n-1}
\end{equation}
and
\begin{equation}\label{main52}
\sum_{k=0}^n {n\choose k} F_{jk} (-\sqrt{5}F_j)^{n-k} B_{n-k}(x) = n F_j \big ((\alpha - \sqrt{5}x)F_j + F_{j-1}\big )^{n-1}.
\end{equation}
\end{theorem}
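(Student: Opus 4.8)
The plan is to prove both identities at once by the generating-function method used in the proofs of Theorems~\ref{thm1}--\ref{thm4}, but with the balancing apparatus replaced by the Binet representation of the Fibonacci numbers; in particular no balancing polynomials are needed. The left-hand sides are Cauchy convolutions, so I would read them off as coefficients of $z^n/n!$ in a product of two exponential generating functions. For the sequence $\{F_{jk}\}_{k\ge 0}$, Binet's formula $F_m=(\alpha^m-\beta^m)/\sqrt5$ gives
\[
\sum_{k=0}^\infty F_{jk}\frac{z^k}{k!}=\frac{1}{\sqrt5}\bigl(e^{\alpha^j z}-e^{\beta^j z}\bigr),
\]
while the Bernoulli factor is exactly $H(x,\sqrt5 F_j z)$ for \eqref{main51} and $H(x,-\sqrt5 F_j z)$ for \eqref{main52}. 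Thus the generating function of the left-hand side of \eqref{main51} is the product $\frac{1}{\sqrt5}\bigl(e^{\alpha^j z}-e^{\beta^j z}\bigr)\,H(x,\sqrt5 F_j z)$, and similarly for \eqref{main52}.

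The decisive step is a cancellation between the two factors, and this is the part I expect to require the most care. Using $H(x,\sqrt5 F_j z)=\dfrac{\sqrt5 F_j z\,e^{x\sqrt5 F_j z}}{e^{\sqrt5 F_j z}-1}$ together with the identity $\alpha^j-\beta^j=\sqrt5 F_j$, I would factor the numerator as $e^{\alpha^j z}-e^{\beta^j z}=e^{\beta^j z}\bigl(e^{\sqrt5 F_j z}-1\bigr)$. The denominator $e^{\sqrt5 F_j z}-1$ then cancels and the whole product collapses to $F_j z\,e^{(\beta^j+x\sqrt5 F_j)z}$. For \eqref{main52} the analogous factorisation $e^{\alpha^j z}-e^{\beta^j z}=-e^{\alpha^j z}\bigl(e^{-\sqrt5 F_j z}-1\bigr)$ cancels the denominator $e^{-\sqrt5 F_j z}-1$ of $H(x,-\sqrt5 F_j z)$ and produces $F_j z\,e^{(\alpha^j-x\sqrt5 F_j)z}$.

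It then remains only to match the exponents. Here I would invoke the elementary identities $\alpha^j=\alpha F_j+F_{j-1}$ and $\beta^j=\beta F_j+F_{j-1}$ to rewrite
\[
\beta^j+x\sqrt5 F_j=(\sqrt5 x+\beta)F_j+F_{j-1},\qquad
\alpha^j-x\sqrt5 F_j=(\alpha-\sqrt5 x)F_j+F_{j-1}.
\]
Writing the coefficient of $z$ in either exponent as $c$, the generating function has the form $F_j z\,e^{cz}=\sum_{n=0}^\infty nF_j c^{\,n-1}\frac{z^n}{n!}$, so comparing the coefficient of $z^n/n!$ yields precisely the right-hand sides of \eqref{main51} and \eqref{main52}. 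Once the cancellation above is spotted, the remaining manipulations are routine bookkeeping with standard Fibonacci relations.
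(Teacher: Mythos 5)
Your proof is correct and follows essentially the same route as the paper: both multiply the Binet-derived exponential generating function of $(F_{jn})$ by $H(x,\pm\sqrt5 F_j z)$ and exploit the cancellation coming from $\alpha^j-\beta^j=\sqrt5 F_j$ (the paper phrases this via $\sinh$, you via $e^{\sqrt5 F_j z}-1$, which is the same computation). The only cosmetic difference is that the paper deduces \eqref{main52} from \eqref{main51} by the reflection $B_n(1-x)=(-1)^nB_n(x)$, whereas you rerun the cancellation directly; both are fine.
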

\begin{proof}
Let $F(z)$ be the exponential generating function for $(F_{jn})_{n\geq 0},j\geq 1$. Evidently, the Binet formula for $F_{jn}$ gives
\begin{equation*}
F(z) = \sum_{n=0}^\infty F_{jn} \frac{z^n}{n!} = \frac{1}{\sqrt{5}}\big (e^{\alpha^j z} - e^{\beta^j z}\big ).
\end{equation*}

Now we use the relations $\alpha^j = \alpha F_j + F_{j-1}$ and $\beta^j = \beta F_j + F_{j-1}$,
to write
\begin{equation*}
F(z) = \frac{2}{\sqrt{5}} e^{(1/2 F_j + F_{j-1})z}\sinh\Big ( \frac{\sqrt{5}F_j}{2}  z\Big ).
\end{equation*}

Hence, it follows that
\begin{align*}
\sum_{n=0}^\infty \Big (\sum_{k=0}^n {n\choose k} F_{jk} (\sqrt{5}F_j)^{n-k} B_{n-k}(x) \Big ) \frac{z^n}{n!} & =
F(z) H(x,\sqrt{5}F_j z) \\
& =  F_j z\, e^{((x-1/2)\sqrt{5}F_j + 1/2 F_j + F_{j-1})z} \\
& =  F_j z\, e^{((\sqrt{5}x + \beta)F_j + F_{j-1})z}.
\end{align*}

This proves the first equation. The second follows upon replacing $x$ by $1-x$
and using $B_n(1-x)=(-1)^nB_n(x)$ \cite[Proposition 9.1.3]{1}  and $\alpha-\beta=\sqrt{5}$.
\end{proof}

When $x=0$, then
\begin{equation*}
\sum_{k=0}^n {n\choose k}  (\sqrt{5}F_j)^{n-k} F_{jk}B_{n-k} = n F_j \beta^{j(n-1)}
= \frac{n}{2}F_{j} \big (L_{j(n-1)} - \sqrt{5}F_{j(n-1)}\big ),
\end{equation*}
which generalizes \eqref{eqcor2}. Similarly,
\begin{equation*}
\sum_{k=0}^n {n\choose k}  (-\sqrt{5}F_j)^{n-k} F_{jk}B_{n-k} = n F_j \alpha^{j(n-1)}
= \frac{n}{2}F_{j} \big (L_{j(n-1)} + \sqrt{5}F_{j(n-1)}\big ).
\end{equation*}

A combination of both yields
\begin{equation*}
\sum_{k=0}^n {n\choose k}  (\sqrt{5}F_j)^{n-k}\bigl(1+(-1)^{n-k}\bigr) F_{jk}B_{n-k} = n F_j L_{j(n-1)}.
\end{equation*}
\begin{corollary}\label{cor6} For each $n\geq 0$ and $j\geq 1$,
\begin{equation}\label{eqcor6}
\sum_{k=0}^n {n\choose k} (\sqrt{5}F_j)^{n-k} \big ( 2^{1-(n-k)}-1\big ) F_{jk}B_{n-k} = n 2^{1-n} F_j L_j^{n-1}.
\end{equation}
\end{corollary}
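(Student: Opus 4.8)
The plan is to specialize Theorem~\ref{thm5}, equation~\eqref{main51}, at the single point $x=1/2$. First I would substitute $x=1/2$ into the left-hand side of \eqref{main51} and invoke the value $B_m(1/2)=(2^{1-m}-1)B_m$ (the same fact used in Corollary~\ref{cor3}, from \cite[Corollary 9.1.5]{1}). With $m=n-k$, this rewrites each summand $F_{jk}(\sqrt{5}F_j)^{n-k}B_{n-k}(1/2)$ as exactly $(\sqrt{5}F_j)^{n-k}(2^{1-(n-k)}-1)F_{jk}B_{n-k}$, so the left-hand side matches the left-hand side of \eqref{eqcor6} verbatim.

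Next I would simplify the right-hand side $nF_j\big((\sqrt{5}x+\beta)F_j+F_{j-1}\big)^{n-1}$ at $x=1/2$. The key algebraic step is to evaluate the base of the power. Since $\beta=(1-\sqrt{5})/2$, one has $\sqrt{5}/2+\beta=\sqrt{5}/2+(1-\sqrt{5})/2=1/2$, which removes all irrationalities. The base then collapses to $\tfrac12 F_j+F_{j-1}$, and using $L_j=F_{j+1}+F_{j-1}=F_j+2F_{j-1}$ this equals $\tfrac12 L_j$. Hence the right-hand side becomes $nF_j\big(L_j/2\big)^{n-1}=n\,2^{1-n}F_j L_j^{n-1}$, which is precisely the right-hand side of \eqref{eqcor6}.

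The argument involves no genuine obstacle; the only point requiring care is the simplification of the base $(\sqrt{5}x+\beta)F_j+F_{j-1}$ at $x=1/2$, where the cancellation $\sqrt{5}/2+\beta=1/2$ combines with the Fibonacci--Lucas identity $L_j=F_j+2F_{j-1}$ to yield the clean value $L_j/2$. Because \eqref{main51} is valid for every $x\in\mathbb{C}$, the substitution $x=1/2$ is unconditionally admissible, and the corollary follows at once.
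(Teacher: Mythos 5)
Your proposal is correct and coincides with the paper's own proof: the paper also obtains \eqref{eqcor6} by setting $x=1/2$ in \eqref{main51} (or \eqref{main52}), invoking $B_n(1/2)=(2^{1-n}-1)B_n$, and simplifying via $F_j+2F_{j-1}=L_j$. Your explicit verification of the cancellation $\sqrt{5}/2+\beta=1/2$ is exactly the simplification the paper leaves to the reader.
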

\begin{proof}
Set $x=1/2$ in \eqref{main51} or \eqref{main52} and use once more $B_n(1/2)=(2^{1-n}-1) B_n$. When simplifying, keep in mind the relation $F_j+2F_{j-1}=L_j$.
\end{proof}

When $j=2$ and $j=4$, respectively, we get \eqref{eqcor3} and \eqref{id1}. For $j=3$, the identity becomes
\begin{equation*}
\sum_{k=0}^n {n\choose k} (2\sqrt{5})^{n-k} \big ( 2^{1-(n-k)}-1\big ) F_{3k}B_{n-k} = n 2^{n}.
\end{equation*}
\begin{corollary}\label{cor7}
Let $n$, $j$ and $q$ be integers with $n$, $j\geq 1$ and $q\geq 2$. Then it holds that
\begin{equation} \label{eqcor7}
\sum_{k=0}^{n} {n\choose k}(\sqrt{5}F_j )^{n-k} \big ( q^{1-(n-k)} - 1 \big )F_{jk} B_{n-k} =
n F_j q^{1-n} \sum_{r=1}^{q-1} \big ( r\alpha^j + (q - r)\beta^j \big )^{n-1}.
\end{equation}
\end{corollary}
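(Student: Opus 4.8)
The plan is to deduce the corollary directly from Theorem~\ref{thm5}, the bridge being the multiplication (Raabe) theorem for Bernoulli polynomials. The coefficient $q^{1-(n-k)}-1$ attached to $B_{n-k}$ on the left is precisely what that theorem produces, exactly as the factor $2^{1-(n-k)}-1$ in Corollary~\ref{cor6} arose from $B_m(1/2)=(2^{1-m}-1)B_m$. So the first step is to record, for each $m\ge 0$, the identity
\begin{equation*}
\sum_{r=1}^{q-1} B_m\Bigl(\tfrac{r}{q}\Bigr)=(q^{1-m}-1)B_m,
\end{equation*}
which follows from the multiplication formula $\sum_{r=0}^{q-1}B_m(r/q)=q^{1-m}B_m(0)$ upon separating the $r=0$ term $B_m(0)=B_m$.

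With this in hand, I would replace $(q^{1-(n-k)}-1)B_{n-k}$ on the left-hand side of \eqref{eqcor7} by $\sum_{r=1}^{q-1}B_{n-k}(r/q)$ and interchange the two finite sums, obtaining
\begin{equation*}
\sum_{r=1}^{q-1}\Bigl(\sum_{k=0}^{n}\binom{n}{k}(\sqrt{5}F_j)^{n-k}F_{jk}\,B_{n-k}\bigl(\tfrac{r}{q}\bigr)\Bigr).
\end{equation*}
The inner sum is exactly the left-hand side of \eqref{main51} evaluated at $x=r/q$, so Theorem~\ref{thm5} turns it into $nF_j\bigl((\sqrt{5}\tfrac{r}{q}+\beta)F_j+F_{j-1}\bigr)^{n-1}$.

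It then remains to simplify the base. Using $\sqrt{5}=\alpha-\beta$ together with $\alpha^j=\alpha F_j+F_{j-1}$ and $\beta^j=\beta F_j+F_{j-1}$, one computes
\begin{equation*}
\Bigl(\sqrt{5}\tfrac{r}{q}+\beta\Bigr)F_j+F_{j-1}=\tfrac{r}{q}(\alpha^j-\beta^j)+\beta^j=\tfrac{1}{q}\bigl(r\alpha^j+(q-r)\beta^j\bigr),
\end{equation*}
so the factor $q^{-(n-1)}=q^{1-n}$ pulls out of the $(n-1)$-th power, and summing over $r$ produces precisely the right-hand side of \eqref{eqcor7}. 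Setting $q=2$ recovers Corollary~\ref{cor6}, which I would note as a consistency check.

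The computations here are all routine; the one genuine step is the recognition in the first paragraph that the peculiar coefficient $q^{1-(n-k)}-1$ is the signature of Raabe's multiplication theorem. Once that identity is isolated and the single Bernoulli number is rewritten as a sum of Bernoulli polynomial values at the rationals $r/q$, the remainder is a bookkeeping exercise of swapping sums and applying Theorem~\ref{thm5} termwise, so I expect no real obstacle beyond that initial observation.
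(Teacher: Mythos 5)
Your proposal is correct and follows essentially the same route as the paper: the multiplication theorem is used to write $(q^{1-(n-k)}-1)B_{n-k}=\sum_{r=1}^{q-1}B_{n-k}(r/q)$, Theorem~\ref{thm5} is applied at $x=r/q$, and the base is simplified via $\alpha^j=\alpha F_j+F_{j-1}$ and $\beta^j=\beta F_j+F_{j-1}$. No issues.
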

\begin{proof} The multiplication theorem \cite[Proposition 9.1.3]{1} $\frac{1}{q} \sum\limits_{r=0}^{q-1} B_n \Big (x + \frac{r}{q}\Big ) = \frac{B_n(qx)}{q^{n}}
$
gives
\begin{equation*}
\big ( q^{1-(n-k)} - 1 \big ) B_{n-k} = \sum_{r=1}^{q-1} B_n \Big (\frac{r}{q}\Big ).
\end{equation*}

Therefore, we can write
\begin{align*}
\sum_{k=0}^{n} {n\choose k} F_{jk} (\sqrt{5}F_j)^{n-k} \big ( q^{1-(n-k)} - 1 \big ) B_{n-k} & = n F_j \sum_{r=1}^{q-1} \Big ( \Big ( \sqrt{5}\,\frac{r}{q}+\beta\Big ) F_j + F_{j-1}\Big )^{n-1} \\
& = n F_j q^{1-n} \sum_{r=1}^{q-1} \big ( \sqrt{5}r F_j + q(\beta F_j + F_{j-1})\big )^{n-1} \\
& = n F_j q^{1-n} \sum_{r=1}^{q-1} \big ( r\alpha^j + (q - r)\beta^j \big )^{n-1}.
\end{align*}
\end{proof}

When $q=2$, then \eqref{eqcor7} gives \eqref{eqcor6}. When $q=3$, then we obtain
\begin{equation*}
\sum_{k=0}^{n} {n\choose k} F_{jk} (\sqrt{5}F_j )^{n-k} \big ( 3^{1-(n-k)} - 1 \big ) B_{n-k}
= n F_j 3^{1-n}\big ( ( L_{j}+\beta^j)^{n-1} + (L_j+\alpha^j)^{n-1}\big).
\end{equation*}
\begin{corollary}\label{cor8}
Let $n$, $j$ and $m$ be integers with $n$, $j\geq 1$ and $0\leq m \leq n-1$. Then
\begin{gather*} \label{eqcor81}
\sum_{k=0}^{n-m} {n\choose k} F_{jk} (\sqrt{5}F_j )^{n-k} (n-k)_m B_{n-m-k}(x) \nonumber\\
=
(n)_{m+1} F_j  (\sqrt{5}F_j )^{m} \big ((\sqrt{5}x+\beta)F_j + F_{j-1}\big )^{n-1-m}
\end{gather*}
and
\begin{gather*} \label{eqcor82}
\sum_{k=0}^{n-m} {n\choose k} F_{jk} (-\sqrt{5}F_j)^{n-k} (n-k)_m B_{n-m-k}(x)\nonumber \\=
(n)_{m+1} F_j \big (-\sqrt{5}F_j \big )^{m} \big ((\alpha - \sqrt{5}x)F_j + F_{j-1}\big )^{n-1-m},
\end{gather*}
where $(y)_n=y(y-1)\cdots (y-n+1)$, $(y)_0=1$, denotes the falling factorial.
\end{corollary}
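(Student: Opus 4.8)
The plan is to obtain both identities as the $m$-fold ``shifted'' versions of Theorem~\ref{thm5}, reusing the very generating functions established in its proof. Recall that there we showed
\begin{equation*}
F(z)\,H(x,\sqrt{5}F_j z) = F_j z\, e^{((\sqrt{5}x+\beta)F_j+F_{j-1})z},
\end{equation*}
where $F(z)=\sum_{n\geq 0}F_{jn}z^n/n!$. Writing $A=(\sqrt{5}x+\beta)F_j+F_{j-1}$, my first step is to observe that the left-hand sum in the corollary is nothing but the $n$-th exponential coefficient of $F(z)$ multiplied by a ``shifted'' Bernoulli factor, namely $(\sqrt{5}F_j)^m z^m\,H(x,\sqrt{5}F_j z)$.

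Concretely, I would compute that multiplying the Bernoulli generating function by $(\sqrt{5}F_j)^m z^m$ reindexes its coefficients as
\begin{equation*}
(\sqrt{5}F_j)^m z^m\,H(x,\sqrt{5}F_j z) = \sum_{\ell=0}^\infty (\ell)_m\, B_{\ell-m}(x)\,(\sqrt{5}F_j)^\ell\,\frac{z^\ell}{\ell!},
\end{equation*}
using the substitution $\ell=s+m$ together with $1/(\ell-m)!=(\ell)_m/\ell!$. Forming the Cauchy product with $F(z)$ then reproduces exactly the summand ${n\choose k}F_{jk}(\sqrt{5}F_j)^{n-k}(n-k)_m B_{n-m-k}(x)$, and since $(n-k)_m$ vanishes for $k>n-m$ the index range collapses to $0\leq k\leq n-m$, as stated. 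Hence the left-hand side of the first identity is the $n$-th exponential coefficient of $(\sqrt{5}F_j)^m z^m F(z)H(x,\sqrt{5}F_j z)$.

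By the displayed generating function this product equals $(\sqrt{5}F_j)^m F_j\, z^{m+1} e^{Az}$. The next step is coefficient extraction: only the $z^{n-m-1}$ term of $e^{Az}$ survives, contributing $A^{n-m-1}/(n-m-1)!$, and multiplying by $n!$ converts this into the falling factorial $(n)_{m+1}=n!/(n-m-1)!$. This yields $(n)_{m+1}F_j(\sqrt{5}F_j)^m A^{n-1-m}$, which is precisely the asserted right-hand side. For the second identity I would argue identically, using the companion generating function $F(z)H(x,-\sqrt{5}F_j z)=F_j z\,e^{A'z}$ with $A'=(\alpha-\sqrt{5}x)F_j+F_{j-1}$; this follows from $H(1-x,w)=H(x,-w)$, equivalently $B_n(1-x)=(-1)^nB_n(x)$, exactly as in the passage from \eqref{main51} to \eqref{main52}.

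I expect no genuine obstacle, as the argument is a structural enhancement of Theorem~\ref{thm5} rather than a new idea; the only point demanding care is the bookkeeping in the coefficient extraction, namely verifying that the prefactor $z^{m+1}$ (in place of $z$) shifts the exponent from $n-1$ to $n-1-m$ while upgrading the scalar factor $n$ into the falling factorial $(n)_{m+1}$.
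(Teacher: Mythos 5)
Your argument is correct, but it takes a different route from the paper. The paper's proof is a one-liner: differentiate the two identities of Theorem~\ref{thm5} $m$ times with respect to $x$ and apply $B_n'(x)=nB_{n-1}(x)$, which turns $B_{n-k}(x)$ into $(n-k)_m B_{n-m-k}(x)$ on the left and produces $(n)_{m+1}(\pm\sqrt{5}F_j)^m$ from the chain rule on the right. You instead reopen the generating-function proof and multiply $F(z)H(x,\pm\sqrt{5}F_j z)=F_j z\,e^{Az}$ by $(\pm\sqrt{5}F_j)^m z^m$, then re-extract coefficients; your bookkeeping (the reindexing $1/(\ell-m)!=(\ell)_m/\ell!$, the collapse of the summation range via $(n-k)_m=0$ for $k>n-m$, and the factor $n!/(n-m-1)!=(n)_{m+1}$) is all accurate, as is the treatment of the second identity via $x\mapsto 1-x$. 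The two methods are secretly the same operation, since $\partial_x H(x,w)=wH(x,w)$ means $m$-fold $x$-differentiation of the identity corresponds exactly to multiplication of the generating function by $w^m$; the paper's version is shorter because it uses Theorem~\ref{thm5} as a black box, whereas yours is self-contained at the level of generating functions but repeats work already done.
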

\begin{proof}
Differentiate the identities in Theorem \ref{thm5} $m$ times and use the fact $B_n'(x)=n B_{n-1}(x)$ \cite[Proposition 9.1.2 ]{1}. When $m\geq n$, then both sides of the identities become zero.
\end{proof}
\begin{corollary}\label{cor9}
For nonnegative integers $n$, $N$ and $j\geq 1$, we have the identities
\begin{equation*} \label{eqcor91}
\sum_{s=0}^{N} \Big (\big (\alpha^j+\sqrt{5}F_j s \big)^n - \big (\beta^j+\sqrt{5}F_j s \big)^n\Big )
= \big (\sqrt{5}F_j N + \alpha^j\big )^n - \beta^{jn}
\end{equation*}
and
\begin{equation*} \label{eqcor92}
\sum_{s=0}^{N} \Big (\big (\beta^j-\sqrt{5}F_j s \big)^n - \big (\alpha^j-\sqrt{5}F_j s \big)^n\Big )
= \big (-\sqrt{5}F_j N + \beta^j\big )^n - \alpha^{jn}.
\end{equation*}
\end{corollary}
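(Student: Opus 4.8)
The plan is to recognize that each summand is a forward difference in the summation variable $s$, so that both sums telescope. The single ingredient I need is the identity $\alpha^j-\beta^j=\sqrt{5}\,F_j$, which is immediate from the relations $\alpha^j=\alpha F_j+F_{j-1}$ and $\beta^j=\beta F_j+F_{j-1}$ already used in the proof of Theorem~\ref{thm5}, combined with $\alpha-\beta=\sqrt{5}$. Equivalently, $\alpha^j=\beta^j+\sqrt{5}F_j$ and $\beta^j=\alpha^j-\sqrt{5}F_j$, and these two shift relations are exactly what make the consecutive bracketed terms collapse.

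For the first identity I would introduce the auxiliary function $g(s)=\big(\beta^j+\sqrt{5}F_j\,s\big)^n$. Since $\alpha^j+\sqrt{5}F_j\,s=\beta^j+\sqrt{5}F_j\,(s+1)$, the summand $\big(\alpha^j+\sqrt{5}F_j s\big)^n-\big(\beta^j+\sqrt{5}F_j s\big)^n$ is precisely $g(s+1)-g(s)$, so that
\begin{equation*}
\sum_{s=0}^{N}\big(g(s+1)-g(s)\big)=g(N+1)-g(0).
\end{equation*}
Evaluating the two surviving endpoints gives $g(0)=\beta^{jn}$ and $g(N+1)=\big(\beta^j+\sqrt{5}F_j(N+1)\big)^n=\big(\sqrt{5}F_j\,N+\alpha^j\big)^n$, which reproduces the claimed right-hand side.

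The second identity follows by the same telescoping argument applied to $h(s)=\big(\alpha^j-\sqrt{5}F_j\,s\big)^n$. Here $\beta^j-\sqrt{5}F_j\,s=\alpha^j-\sqrt{5}F_j\,(s+1)$, so the summand equals $h(s+1)-h(s)$ and the sum collapses to $h(N+1)-h(0)$ with $h(0)=\alpha^{jn}$ and $h(N+1)=\big(-\sqrt{5}F_j\,N+\beta^j\big)^n$. Notably, this proof is entirely independent of the balancing-polynomial machinery of the earlier sections; it rests only on the Binet shift $\alpha^j-\beta^j=\sqrt{5}F_j$.

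There is essentially no obstacle here once the telescoping structure is spotted. The only point requiring care is the endpoint bookkeeping, namely verifying that the shifted top terms $g(N+1)$ and $h(N+1)$ reassemble into expressions involving $\alpha^j$ and $\beta^j$ respectively, which is exactly where the relation $\alpha^j-\beta^j=\sqrt{5}F_j$ is invoked a second time.
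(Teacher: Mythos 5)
Your proof is correct, but it takes a genuinely different route from the paper. The paper obtains Corollary~\ref{cor9} by integrating both sides of \eqref{main51} from $0$ to $N+1$, invoking Faulhaber's formula $\sum_{s=0}^{N}s^m=\int_{0}^{N+1}B_m(x)\,dx$ on the left (which forces a separate check of the cases $n=0,1$ and the convention $0^0=1$), and then reassembling the binomial sum via Binet's formula. You instead observe that $\alpha^j-\beta^j=\sqrt{5}F_j$ turns each summand into a first difference $g(s+1)-g(s)$, so the whole sum telescopes; your endpoint computations $g(0)=\beta^{jn}$, $g(N+1)=\bigl(\sqrt{5}F_jN+\alpha^j\bigr)^n$ and the analogous ones for $h$ are all correct. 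What your argument buys is economy and transparency: it needs no Bernoulli polynomials, no Theorem~\ref{thm5}, and no case distinction for small $n$, and it exposes the corollary as an elementary identity in disguise. What it loses is the connection the paper is trying to exhibit, namely that this power-sum identity sits downstream of the Fibonacci--Bernoulli relation \eqref{main51} --- the corollary is presented precisely as the ``integrated form'' of that theorem, and that derivation is the content the authors care about. Both proofs are valid; yours is the shorter and more self-contained one.
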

\begin{proof} We only prove the first identity. Integrate both sides of \eqref{main51} from $0$ to $N+1$ and use the formula
\begin{displaymath}
\sum_{s=0}^{N} s^n = \int_{0}^{N+1}B_{n}(x)dx.
\end{displaymath}
The last integral identity actually reads as
\begin{displaymath}
\sum_{s=0}^{N} s^n = \int_{0}^{N+1}B_{n}(x) dx = \frac{1}{N+1} \big (B_{n+1}(N+1)-B_{n+1}\big )
\end{displaymath}
(Faulhaber's formula) and holds for all $n\geq 2$, so a justification is needed. As we will work with the integral part only, with the convention that $0^0=1$, the cases $n=0$ and $n=1$ can be checked explicitly. Hence, for the LHS of \eqref{main51} we obtain
\begin{eqnarray*}
\sum_{k=0}^{n} \binom{n}{k} F_{jk} \big (\sqrt{5}F_j \big )^{n-k} \int_{0}^{N+1}B_{n-k}(x)dx & = &
\sum_{s=0}^N\sum_{k=0}^{n} \binom{n}{k} F_{jk} \big (\sqrt{5}F_j s \big )^{n-k} \\
& = & \frac{1}{\sqrt{5}} \sum_{s=0}^N \Big (\big (\alpha^j+\sqrt{5}F_j s \big)^n - \big (\beta^j+\sqrt{5}F_j s \big)^n\Big ).
\end{eqnarray*}
The integral on the RHS of \eqref{main51} is easily evaluated as
\begin{displaymath}
n F_j\int_{0}^{N+1}\big((\sqrt{5}x+\beta)F_j + F_{j-1}\big )^{n-1}dx = \frac{1}{\sqrt{5}}\big (\sqrt{5}F_j (N+1) + \beta^j\big )^n - \beta^{jn}.
\end{displaymath}
The proof of the second formula is similar.
\end{proof}

\section{Conclusion}
In this article, we have discovered new identities relating Bernoulli numbers (polynomials) to balancing and Lucas-balancing polynomials.
We have also derived a general identity involving Bernoulli polynomials and Fibonacci numbers in arithmetic progression.
In our future papers, we will discuss the analogue results for Euler polynomials and Lucas-balancing polynomials as well as
identities connecting Bernoulli polynomials with Fibonacci and Chebyshev polynomials.

\bigskip
\hrule
\bigskip

\noindent 2010 {\it Mathematics Subject Classification}: 11B37, 11B65, 05A15.

\bigskip
\noindent \emph{Keywords:} Bernoulli polynomials and numbers, Balancing polynomials and numbers, \break Fibonacci numbers, Lucas numbers, generating function.

\end{document}